 \newtheorem{thm}{Theorem}[section]
 \newtheorem{prop}[thm]{Corollary}
 \newtheorem{open}[thm]{Open Question}
 \newtheorem{lem}[thm]{Lemma}
 \theoremstyle{remark}
 \numberwithin{equation}{section}
\DeclareMathOperator{\sign}{sign}
\begin{document}
\title[On the Odlyzko-Stanley enumeration problem]
  {  On the Odlyzko-Stanley enumeration problem and Waring's problem over finite fields}

\author{Jiyou Li}
\address{Department of Mathematics, Shanghai Jiao Tong University, Shanghai, P.R. China}
\email{lijiyou@sjtu.edu.cn}

%\author{Daqing Wan}
%\address{Department of Mathematics, University of California, Irvine, CA 92697-3875, USA}
%\email{dwan@math.uci.edu}

 %\author[ J. Y. Li, D. Q. Wan]
 %{Jiyou Li, Daqing Wan}

%\date{2006 10 10}
%\thanks{Submitted September 8, 2005. Published January 19, 2006.}
\thanks{This work is supported by the National Science Foundation of China
(11001170).}
%This work is supported by NSFC, grants
%National Science Foundation of China
%  and \hfill\break\indent
%Y2005A07 from Natural Science Foundation, Shandong Province,
%China} \subjclass[2000]{34B16} \keywords{Singular sublinear
%boundary-value problem; positive solution; \hfill\break\indent
%fixed point theorem; cone; higher order differential equation}

\begin{abstract}
We obtain an asymptotic formula on the Odlyzko-Stanley enumeration
problem. % the number of variables is relatively small.
 Let $N_m^*(k,b)$ be the number of $k$-subsets
 $S\subseteq {\bf F}_p^*$ such that $\sum_{x\in S}x^m=b$.
 If $m<p^{1-\delta}$, then there is a constant
 $\epsilon=\epsilon(\delta)>0$ such that
 \begin{align*}
\left| N_m^*(k,b)-p^{-1}{p-1 \choose k}  \right|\leq
{p^{1-\epsilon}+mk-m \choose k}.
 \end{align*}

In addition, let $\gamma'(m,p)$ denote the distinct Waring's number
$(\!\!\!\!\mod p)$, the smallest positive integer $k$ such that
every integer is a sum of m-th powers of $k$-distinct elements
$(\!\!\!\!\mod p)$. The above bound implies that there is a constant
$\epsilon(\delta)>0$ such for any prime $p$ and any
$m<p^{1-\delta}$, if $\epsilon^{-1}<(e-1)p^{\delta-\epsilon}$, then
$$\gamma'(m,p)\leq \epsilon^{-1}.$$

%This improves the bound given by J. Bourgain in
%\cite{B3}.

%As an illustration of applications, we give a combinatorial proof on
%a problem raised by Stanley.

%In particular,  we obtain some positive results in the decoding
%problems arising from Reed-Solomon codes this together with the by
%by using the Weil bound.
%We also try to apply our method to the calculations of the number of
%permutation polynomials over finite fields.
\end{abstract}

\maketitle \numberwithin{equation}{section}
\newtheorem{theorem}{Theorem}[section]
\newtheorem{lemma}[theorem]{Lemma}
\newtheorem{example}[theorem]{Example}
\allowdisplaybreaks

\section{Introduction}
Let $p$ be an odd prime, and let ${\bf F}_p$ be the prime field of
order $p$. Let $m$ be a positive integer and $b$ be an element in
${\bf F}_p$. Let $N_m^*(b)$ be the number of subsets $S\subseteq
{\bf F}_p^*$ with the property that
$$\sum_{x\in S}x^m=b.$$
If $S$ is the empty set, we set $\sum_{x\in {\o}}x^m=0$.  For
details of this problem we refer to \cite{St}. It was shown by
Odlyzko-Stanley \cite{OS} that
 \begin{align}\label{1} \left |N_m^*(b)-2^{p-1}p^{-1}\right|\leq
e^{O(m\sqrt{p}\log{p})}.
\end{align}
  This bound can
be improved to a sharper bound
 \begin{align}\label{11} \left
|N_m^*(b)-2^{p-1}p^{-1}\right|\leq \frac {4}{\sqrt{2\pi}
}e^{m\sqrt{p}\log{p}}. \end{align}
 Moreover,  if
${\bf F}_p^*$ is replaced by ${\bf F}_q^*$, the multiplication group
of a finite field of order $q$ and of characteristic $p$, then
\begin{align}\label{2}\left |N_m^*(b)-2^{q-1}q^{-1}\right|\leq
\frac {4p}{\sqrt{2\pi} q}e^{(m\sqrt{q}+q/p)\log{q}}.
 \end{align}
These bounds follow directly from several counting formulas obtained
by Zhu-Wan \cite{W}. Their proof combines the techniques of Gauss
sums, Jacobi sums and a new sieving argument. More precisely, let
$N_m^*(k,b)$ be the number of $k$-subsets
 $S\subseteq {\bf F}_q^*$
 such that $\sum_{x\in S}x^m=b.$. %Equivalently $N_m^*(k,b)$ is
%  the number of unordered solutions of the diagonal
% equation  $$x_1^m+x_2^m+\cdots+x_k^m=b,$$  where $x_i\in{\bf F}_q^*$ and $x_i's$ are distinct.
 They proved that
\begin{align}\label{3}\left | N_m^*(k,b)- q^{-1}{q-1 \choose k}  \right|\leq
2q^{-1/2}{ m\sqrt{q}+q/p+k  \choose k}.
\end{align}
 %In particular, when $q=p$ is a prime number,
%$$\left | N_m^*(k,b)- {(p-1)_k} p^{-1} \right| \leq \left( m\sqrt{p}+k+1\right)_k.$$
Note that $N_m^*(b)=\sum_{k=0}^q N_m^*(k,b)$ and  it is sufficient
to consider the case $|S|\leq(q-1)/2$ by symmetry. Hence (\ref{11})
and (\ref{2}) follow from (\ref{3}) directly.

Note that all above bounds are nontrivial only when
$n<p^{{1/2}-\epsilon}$. Heath-Brown, Konyagin and Shparlinski
\cite{HK,KS} improved this restriction to $n<p^{{2 \over
3}-\epsilon}$. Precisely, they obtain
 $$\left |N_m^*(b)-2^{p-1}p^{-1}\right|\leq
\left\{
\begin{array}{ll}
  e^{O(m{p}^{1/2}\log{p})}, \ \  m\leq {p}^{1/3};\\
  e^{O(m^{5/8}{p}^{5/8}\log{p})}, \ \  p^{1/3}\leq m\leq p^{1/2};\\
  e^{O(m^{3/8}{p}^{3/4}\log{p})},\ \   {p}^{1/2}\leq m\leq p^{2/3}.  \\
    \end{array}
    \right.
    $$
    Their proof relies on the monomial exponential sum bound
 $$\left |\sum_{x\in {\bf F}_p^*}e_p(ax^m) \right|\ll
\left\{
 \begin{array}{ll}
  m{p}^{1/2}, \ \  m\leq {p}^{1/3};\\
     m^{5/8}{p}^{5/8}, \ \  p^{1/3}\leq m\leq p^{1/2};\\
    m^{3/8}{p}^{3/4},\ \   {p}^{1/2}\leq m\leq p^{2/3};\\
    \end{array}
    \right.
    $$ for any integer $a$ with $p\nmid a$, where $e_p(x)=e^{2\pi i x/p}$ is the additive
    character on ${\bf F}_p$. Cochrane and Pinner
  \cite{CP} made explicit this bound to that
  $$\left |\sum_{x\in {\bf F}_p^*}e_p(ax^m) \right|\leq
\left\{
 \begin{array}{ll}
  m{p}^{1/2}, \ \  m\leq 3{p}^{1/3};\\
     \lambda m^{5/8}{p}^{5/8}, \ \  3p^{1/3}\leq m< p^{1/2};\\
    \lambda m^{3/8}{p}^{3/4},\ \   {p}^{1/2}\leq m< \frac 13p^{2/3};\\
    \end{array}
    \right.$$
      where $\lambda$ can be chosen to be $2/\sqrt[3]{4}\approx1.51967$.

When $m$ is large, Bourgain and Konyagin \cite{B1, BGK, BK} obtained
a celebrated nontrivial bound for a large kind of subgroups. Let $H$
be a subgroup of ${\bf F}_p^*$. Suppose $|H|>p^{\delta}$, then there
exits a constant ${\delta'}>0$ such that for any integer $a$ with
$p\nmid a$,
 \begin{align}\label{4}
  \bigg|\sum_{x\in H}e_p(ax) \bigg|<
  |H|^{1-\delta'}.
 \end{align}
%if $m<p^{1-\epsilon}$,
%  \begin{align}\label{4}
%  \left |\sum_{x=1}^{p-1}e_p(x^m) \right|< p^{1-\delta},
% \end{align}
For instance, Bourgain and Garaev proved in \cite{BG} that if
$\delta>1/4$, then one can take $\delta'=0.000015927+o(1)$. Taking
$H=\{x^m, x\in {\bf F}_p^*\}$ and following the same argument of
Konyagin and Shparlinski \cite{KS}, the above bound immediately
implies that if $m<p^{1-\delta}$, then
 \begin{align}\label{5}
 N_m^*(b)=2^{p-1}p^{-1}+e^{O({p^{1-\epsilon}})},
 \end{align}
 where $\epsilon=\epsilon(\delta)$ is a positive constant.This is a
 significant improvement of (\ref{1}).

  In this paper, by using the above bound and a distinct coordinate
  sieve argument, we first consider the subset sum problem over
   $H\subseteq {\bf F}_p^*$ and thus obtain a new counting formula
   via a combinatorial argument.  It gives a more precise bound on the
   number $N_m^*(k,b)$ for $m<p^{1-\delta}$ and suitable $k$.
  It is proved in this paper that
 \begin{thm}\label{thm1.0}Let $N_m^*(k,b)$ be the number of $k$-subsets
 $S\subseteq {\bf F}_p^*$ such that $\sum_{x\in S}x^m=b$.
 If $m<p^{1-\delta}$, then there is a constant
 $0<\epsilon=\epsilon(\delta)<\delta$ such that
 \begin{align}\label{6}
\left| N_m^*(k,b)-p^{-1}{p-1 \choose k}  \right|\leq
{p^{1-\epsilon}+mk-m \choose k}.
 \end{align}
  \end{thm}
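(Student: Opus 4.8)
The plan is to run the additive‑character expansion, split off the trivial frequency $t=0$ as the main term $p^{-1}\binom{p-1}{k}$, and estimate the remaining frequencies by pushing everything onto the subgroup $H=\{x^m:x\in\mathbf{F}_p^*\}$ of $m$-th powers — so that the Bourgain--Konyagin bound $(\ref{4})$ applies — and then running the distinct‑coordinate‑sieve bookkeeping. Concretely, using $\sum_{|S|=k}\prod_{x\in S}u_x=[z^k]\prod_{x\in\mathbf{F}_p^*}(1+zu_x)$ (coefficient of $z^k$), I would write
\[
N_m^*(k,b)=\frac1p\sum_{t=0}^{p-1}e_p(-tb)\,[z^k]\prod_{x\in\mathbf{F}_p^*}\bigl(1+z\,e_p(tx^m)\bigr),
\]
and group the $x$ according to the value of $x^m$, which turns the inner product into $f_t(z)^d$ with $d=\gcd(m,p-1)$, $h:=|H|=(p-1)/d$, and $f_t(z):=\prod_{y\in H}(1+z\,e_p(ty))$. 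The $t=0$ term is $p^{-1}[z^k](1+z)^{dh}=p^{-1}\binom{p-1}{k}$, exactly the stated main term, so the task reduces to bounding $|[z^k]f_t(z)^d|$ for $t\not\equiv 0\pmod p$. Here $f_t(z)=\sum_{k'\ge0}e_{k'}^{(t)}z^{k'}$ where $e_{k'}^{(t)}$ is the Fourier coefficient at $t$ of the subset‑sum‑over‑$H$ count $\#\{T\subseteq H:|T|=k',\ \sum_{y\in T}y=b\}$; this is the subgroup subset‑sum problem flagged in the introduction and is where $(\ref{4})$ will be used.

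The core step is to estimate $f_t$ and its powers. Since $m<p^{1-\delta}$ we have $h=(p-1)/d\ge(p-1)/m>p^{\delta}(1-p^{-1})>p^{\delta/2}$ for $p$ large in terms of $\delta$, so $(\ref{4})$ (applied with exponent $\delta/2$) supplies $\delta'=\delta'(\delta)\in(0,1)$ with $\bigl|\sum_{y\in H}e_p(ay)\bigr|<h^{1-\delta'}$ for every $a\not\equiv0\pmod p$. The power sums of the roots $\alpha_y:=e_p(ty)$ of $f_t$ are $P_j=\sum_{y\in H}\alpha_y^{\,j}=\sum_{y\in H}e_p(tjy)$, and since $1\le j\le k\le p-1$ forces $tj\not\equiv0\pmod p$, we get $|P_j|<h^{1-\delta'}$ for all $1\le j\le k$. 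Now I would convert this into coefficient bounds via the formal identity $f_t(z)=\exp\!\bigl(\sum_{j\ge1}\frac{(-1)^{j-1}}{j}P_jz^j\bigr)$: since every coefficient of $\exp(\sum_jc_jz^j)$ is a polynomial in the $c_j$ with non‑negative coefficients, for $k'\le k$
\[
|e_{k'}^{(t)}|\le[z^{k'}]\exp\!\Bigl(h^{1-\delta'}\textstyle\sum_{j\ge1}\frac{z^j}{j}\Bigr)=[z^{k'}](1-z)^{-h^{1-\delta'}}=\binom{h^{1-\delta'}+k'-1}{k'},
\]
which is exactly the distinct‑coordinate sieve (equivalently, Newton's identities) in this setting. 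Raising $f_t$ to the $d$-th power and using $\sum_{k'\ge0}\binom{h^{1-\delta'}+k'-1}{k'}z^{k'}=(1-z)^{-h^{1-\delta'}}$,
\[
|[z^k]f_t(z)^d|\le\sum_{k_1+\cdots+k_d=k}\ \prod_{i=1}^d|e_{k_i}^{(t)}|\le[z^k](1-z)^{-dh^{1-\delta'}}=\binom{dh^{1-\delta'}+k-1}{k}.
\]

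Finally I would sum over the $p-1$ nonzero frequencies to get
\[
\Bigl|N_m^*(k,b)-p^{-1}\binom{p-1}{k}\Bigr|\le\frac{p-1}{p}\binom{dh^{1-\delta'}+k-1}{k}\le\binom{dh^{1-\delta'}+k-1}{k},
\]
and since $dh^{1-\delta'}=(p-1)h^{-\delta'}<p\cdot p^{-\delta\delta'/2}=p^{1-\delta\delta'/2}$, setting $\epsilon:=\delta\delta'/2\in(0,\delta)$ gives $dh^{1-\delta'}<p^{1-\epsilon}$, whence $\binom{dh^{1-\delta'}+k-1}{k}\le\binom{p^{1-\epsilon}+k-1}{k}\le\binom{p^{1-\epsilon}+mk-m}{k}$, the last inequality because $mk-m\ge k-1$ for $m\ge1$ and $\binom{n+j}{k}$ is non‑decreasing in $j\ge0$. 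The finitely many primes $p\le p_0(\delta)$ are handled by shrinking $\epsilon$.

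I expect the genuine difficulty to be the symmetric‑function bookkeeping in the core step: extracting a bound of exactly the shape $\binom{\ast+k-1}{k}$, uniform in $b$, from power‑sum estimates, and then controlling the $d$-fold product cleanly. The decisive point is to stay on the Fourier side and bound $|[z^k]f_t(z)^d|$ for each $t$ separately, rather than first writing $N_m^*(k,b)$ as a convolution of the subset‑sum‑over‑$H$ counts over the constraint $\sum_i b_i=b$ and estimating that in physical space, which loses spurious factors of $\sqrt p$. As a by‑product this route actually yields the stronger bound $\binom{p^{1-\epsilon}+k-1}{k}$; the weaker $mk-m$ in the statement is what one gets if, instead of first reducing to $H$, one applies the distinct‑coordinate sieve directly to $x_1^m+\cdots+x_k^m$ over $\mathbf{F}_p^*$.
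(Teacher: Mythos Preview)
Your argument is correct and in fact proves more than the paper does. Both routes start the same way---pass to the subgroup $H=\{x^m:x\in\mathbf{F}_p^*\}$ and invoke the Bourgain--Konyagin bound on $\sum_{y\in H}e_p(ay)$---but they diverge in how the information is transferred back from $H$ to $\mathbf{F}_p^*$. The paper first applies the distinct-coordinate sieve in \emph{physical space} to bound the subgroup counts $M(j,b)=N(j,b,H)$ (Lemma~\ref{lem1.1} with $D=H$, giving $|M(j,b)-p^{-1}\binom{s}{j}|\le\binom{s^{1-\delta'}+j-1}{j}$), and then lifts combinatorially via $N_m^*(k,b)=\sum_j M(j,b)\sum_{i_1+\cdots+i_j=k,\,i_t>0}\binom{m}{i_1}\cdots\binom{m}{i_j}$. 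To sum the resulting error the paper majorizes $\binom{s^{1-\delta'}+j-1}{j}\le\binom{s^{1-\delta'}+k-1}{j}$ so that Lemma~\ref{lem1.3} applies a second time; this crude replacement is exactly what produces the extra $mk-m$. You instead stay on the Fourier side throughout, bounding $|[z^k]f_t(z)^d|$ for each nonzero frequency separately, so the $d$-fold convolution costs only a factor $d$ in the \emph{parameter} of the negative binomial and you land on $\binom{dh^{1-\delta'}+k-1}{k}\le\binom{p^{1-\epsilon}+k-1}{k}$. This is strictly sharper than the stated bound and, as you note, actually settles the paper's Open Question affirmatively. One small correction to your closing diagnosis: the paper \emph{does} reduce to $H$ before sieving; the $mk-m$ enters through the physical-space lifting, not from sieving directly over $\mathbf{F}_p^*$.
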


\begin{prop} Suppose that $p, m, s, \delta, \epsilon$ are as in Theorem 1.1. If there is a constant $0<c<1$ such that
$-\frac 1 {\log{c}}\log{p}<k<cp^{\delta}-p^{\delta-\epsilon}$, then
the equation
\begin{align*}x_1^m+x_2^m+\cdots+x_k^m=b, \ x_i\in {\bf F}_p^*, \
x_i\neq x_j, \ i\ne j
 \end{align*}
 has at least a solution. In particular, if
 $\epsilon^{-1}<k<(e-1)p^{\delta-\epsilon}$, then the above equation
has a solution.
 \end{prop}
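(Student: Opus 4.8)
The plan is to derive the Proposition from Theorem~\ref{thm1.0} by elementary estimates on (generalized) binomial coefficients. First I would note that the number of ordered tuples $(x_1,\dots,x_k)\in({\bf F}_p^*)^k$ with pairwise distinct coordinates and $\sum_i x_i^m=b$ equals $k!\,N_m^*(k,b)$, so the displayed equation is solvable if and only if $N_m^*(k,b)>0$. By the bound \eqref{6} of Theorem~\ref{thm1.0} it therefore suffices to prove the strict inequality
\[
\binom{p^{1-\epsilon}+mk-m}{k}<p^{-1}\binom{p-1}{k},
\]
the left side being read as $\binom{x}{k}=x(x-1)\cdots(x-k+1)/k!$ at $x=p^{1-\epsilon}+m(k-1)$.

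Put $A=p^{1-\epsilon}+m(k-1)$ and $B=p-1$. The hypothesis $-\tfrac{1}{\log c}\log p<k$ forces $k\ge1$; since $A-(k-1)=p^{1-\epsilon}+(m-1)(k-1)\ge p^{1-\epsilon}>0$ and $k<cp^{\delta}<p$, both $\binom{A}{k}$ and $\binom{B}{k}$ are positive, and
\[
\frac{\binom{A}{k}}{\binom{B}{k}}=\prod_{i=0}^{k-1}\frac{A-i}{B-i}.
\]
The crucial estimate is $A<cB$: from $0<k<cp^{\delta}-p^{\delta-\epsilon}$ and $0<m<p^{1-\delta}$ one gets $mk<p^{1-\delta}\bigl(cp^{\delta}-p^{\delta-\epsilon}\bigr)=cp-p^{1-\epsilon}$, hence $A=p^{1-\epsilon}+mk-m<cp-m\le cp-1<c(p-1)=cB$. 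Consequently $0<A\le B$, so $t\mapsto\frac{A-t}{B-t}$ is nonincreasing on $[0,k-1]$, each factor above is at most $A/B<c$, and $\binom{A}{k}/\binom{B}{k}<c^{k}$.

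To conclude the first assertion I would use the lower bound on $k$: as $\log c<0$, the inequality $k>-\tfrac{1}{\log c}\log p$ is equivalent to $k\log c<-\log p$, that is $c^{k}<1/p$. Hence $p\binom{A}{k}<p\,c^{k}\binom{B}{k}<\binom{B}{k}$, which is exactly what was needed, so $N_m^*(k,b)>0$. For the ``in particular'' clause, given an integer $k$ with $\epsilon^{-1}<k<(e-1)p^{\delta-\epsilon}$, the two requirements on $c$ reduce to $\dfrac{k+p^{\delta-\epsilon}}{p^{\delta}}<c<p^{-1/k}$; such a $c$ exists because $k+p^{\delta-\epsilon}<e\,p^{\delta-\epsilon}\le p^{\delta-1/k}$, the last inequality being $(\epsilon-\tfrac1k)\log p\ge1$, which follows from $\tfrac1k<\epsilon$ once $p$ is large in terms of $\delta$ (as is implicit in the hypothesis). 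Applying the first part with this $c$ finishes the proof.

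The step I expect to be the main obstacle is exactly this last piece of bookkeeping: replacing $\binom{A}{k}$ by $c^{k}\binom{B}{k}$ is wasteful, and one must verify that it still leaves room for the stated two-sided window on $k$ — in particular that the subtracted term $p^{\delta-\epsilon}$ in the window, together with $k$ being an integer strictly above $\epsilon^{-1}$, is precisely what produces the clean constant $e-1$ (and the attendant implicit lower bound on $p$). The rest is routine manipulation of generalized binomial coefficients and of the two hypotheses on $k$.
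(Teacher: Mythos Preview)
Your proposal is correct and follows essentially the same route as the paper: reduce to $\binom{A}{k}<p^{-1}\binom{B}{k}$ with $A=p^{1-\epsilon}+m(k-1)$ and $B=p-1$, bound the ratio factor-by-factor by $c^k$ using $A<cB$ (which is exactly the upper constraint $k<cp^{\delta}-p^{\delta-\epsilon}$), and then invoke $c^k<1/p$ from the lower constraint on $k$; for the ``in particular'' part the paper simply fixes $c=ep^{-\epsilon}$, which is equivalent to your existence argument for $c$. Your handling of the endpoint issue in the last clause (that $k>\epsilon^{-1}$ only gives $(\epsilon-\tfrac1k)\log p\ge 1$ for $p$ large) matches the paper's own looseness there, where the derived condition is actually $k>\frac{\log p}{\epsilon\log p-1}$, which merely tends to $\epsilon^{-1}$.
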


Note that this is a constant lower bound. This corollary has direct
application to the subset version of Waring's number mod $p$. We
first recall the  definition of ordinary Waring's number.  Let
$\gamma(m,p)$ denote Waring's number $(\!\!\!\!\mod p)$, the
smallest positive integer $k$ such that every integer is a sum of
m-th power $(\!\!\!\!\mod p)$.   This number has been thoroughly
studied. Note that we can always assume that $m< (p-1)/2$. The first
bound
$$\gamma(m,p)\leq m$$ for any prime $p$ was proved by Cauchy in
1813, as reported in \cite{AT}. Dozens of papers, for instance,
\cite{I, SHE, D1, D2, DT, H, K, C, WV}, studied Waring's number mod
a prime number, and generally, Warng's number mod an integer,
Waring's number over finite fields, p-adic integers and a general
commutative ring. We refer to \cite{CCP} for the previous results of
this problem.

%
% I. Chowla \cite{I} showed in 1943 that $$\gamma(m,p)\ll{m}^{0.88}$$ if $m<p/3$.
% S. Chowla, H.B. Mann and E.G.
%Straus \cite{SHE} showed that
%$$\gamma(m,p)\leq\lfloor m/2\rfloor+1,$$ and Dodson \cite{D1, D2} sharpened this to
%$\gamma(m,p)\ll{m}^{7/8}$ for sufficiently lare m. Dodson and
%Tiet\"{a}v\"{a}inen \cite{DT} obtained that
%$$\gamma(m,p)\ll{m}^{1/2+\epsilon}$$ and
%Heilbronn \cite{H} conjectured that, given any positive $\epsilon$
%and  $m$ is sufficiently small, then
%$$\gamma(m,p)\ll{m}^{\epsilon}.$$
%Konyagin \cite{K} proved this conjecture by showing that
%$$\gamma(m,p)\leq{(\ln m)}^{2+\epsilon}$$
%if $p\leq m \ln m (\ln{\ln m+1})^{-1+\epsilon}$ and he also
%conjectured that the exponent of the logatirhm can be reduced to
%$1+\epsilon$. Heilbronn also conjectured for any $m$,
%$$\gamma(m,p)\ll{m}^{1/2}.$$

The recent progress obtained by Ciper, Cocharane and Pinner
\cite{CCP} states that for any $\epsilon>0$ there is a constant
$c(\epsilon)$ such that if $\phi(s)\geq 1/\epsilon $ then
$$\gamma(m,p)\leq c(\epsilon)m^{\epsilon},$$
where $s=(p-1)/m$ and  $\phi$ is the Euler's totient function. By
the bound of Bourgain and Konyagin, and by a similar argument of
Konyagin and Shparlinski \cite{KS}, one can easily get
\begin{prop}\label{prop1.3}
 There is an absolute constant $C>0$ such that for $m<p^{1-\delta}$,
$$\gamma(m,p)\leq C^{1/\delta}.$$
\end{prop}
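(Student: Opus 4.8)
The plan is to deduce Proposition \ref{prop1.3} from the monomial exponential sum bound of Bourgain and Konyagin (inequality (\ref{4})) by the same sieving/counting strategy that Konyagin and Shparlinski used, but keeping careful track of how the exponent loss $\delta'$ degrades under iteration. First I would set $H=\{x^m:x\in{\bf F}_p^*\}$, so $|H|=(p-1)/\gcd(m,p-1)\geq (p-1)/m> p^{\delta}-1$; thus for $p$ large (\ref{4}) applies and gives $|\sum_{x\in H}e_p(ax)|<|H|^{1-\delta'}$ for all $a$ with $p\nmid a$, where $\delta'=\delta'(\delta)>0$. Next I would estimate $R_k(b)$, the number of representations of $b$ as an \emph{ordered} sum $h_1+\cdots+h_k$ with $h_i\in H$, via additive characters:
\begin{align*}
R_k(b)=\frac1p\sum_{a\in{\bf F}_p}e_p(-ab)\Bigl(\sum_{x\in H}e_p(ax)\Bigr)^k=\frac{|H|^k}{p}+\frac1p\sum_{a\neq0}e_p(-ab)\Bigl(\sum_{x\in H}e_p(ax)\Bigr)^k,
\end{align*}
so $|R_k(b)-|H|^k/p|\leq |H|^{k(1-\delta')}$. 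Hence $R_k(b)>0$ once $|H|^k/p>|H|^{k(1-\delta')}$, i.e. once $|H|^{k\delta'}>p$, i.e. once $k>\log p/(\delta'\log|H|)$. Since $|H|>p^{\delta}/2$ say, $\log|H|>(\delta/2)\log p$ for large $p$, and this holds as soon as $k>2/(\delta\delta')=:k_0(\delta)$, a bound depending only on $\delta$.

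The remaining point is to convert this into a bound of the shape $C^{1/\delta}$. Taking $k=\lceil k_0(\delta)\rceil+1$ shows every $b$ is a sum of at most $k$ $m$-th powers for all $p\geq p_0(\delta)$, so $\gamma(m,p)\leq k_0(\delta)+O(1)$ for those $p$; the finitely many small primes $p<p_0(\delta)$ are absorbed by enlarging the constant. To get the clean exponential form $C^{1/\delta}$ I would invoke the explicit dependence from \cite{BG,BGK}: for $\delta$ bounded away from $0$ one may take $\delta'$ a fixed positive constant, but as $\delta\to0$ the best available bounds give $\delta'\gg\delta^{2}$ (or at worst $\delta'\gg\exp(-c/\delta)$-type losses are avoidable in the Bourgain--Konyagin range), whence $k_0(\delta)=2/(\delta\delta')\ll\delta^{-3}$ or, accommodating the weakest case, $k_0(\delta)\leq C^{1/\delta}$ for an absolute $C$. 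Thus $\gamma(m,p)\leq C^{1/\delta}$.

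I expect the only real obstacle to be bookkeeping of the constant $\delta'=\delta'(\delta)$: one must cite a version of (\ref{4}) in which $\delta'$ is an \emph{explicit} function of $\delta$ valid for \emph{all} $\delta>0$ (not merely $\delta>1/4$ as in \cite{BG}), so that $k_0(\delta)=2/(\delta\delta'(\delta))$ can be bounded by $C^{1/\delta}$. Once such a quantitative form is in hand, everything else is the routine character-sum computation above plus the standard device of absorbing small primes into the constant. I would also remark that no distinctness of the $x_i$ is required here (unlike Theorem \ref{thm1.0} and the preceding corollary), which is why this statement is comparatively soft.
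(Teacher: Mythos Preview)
Your approach is exactly the one the paper indicates: the paper does not give a detailed proof of Proposition~\ref{prop1.3} at all, but merely asserts that it follows ``by the bound of Bourgain and Konyagin, and by a similar argument of Konyagin and Shparlinski \cite{KS},'' and then immediately cites Cochrane--Cipra \cite{CC} for the explicit value $C=4$. Your character-sum computation with $R_k(b)$ is precisely the Konyagin--Shparlinski argument the paper has in mind, and your identification of the only nontrivial point---that one needs a quantitative $\delta'=\delta'(\delta)$ to pass from the bound $k_0(\delta)=O(1/(\delta\delta'))$ to the shape $C^{1/\delta}$---is accurate. The paper does not resolve this point itself; it simply defers to \cite{CC} (and implicitly to the Glibichuk--Konyagin sumset argument underlying it), where the iteration is done combinatorially on sumsets rather than through the exponential-sum bound, yielding $C=4$ directly. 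So your proposal matches the paper's sketch, and your caveat about needing an explicit $\delta'(\delta)$ is exactly the gap the paper fills by citation rather than by argument.
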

Cochrane and Cipra \cite{CC} showed that $C$ can be chosen to be 4
and $\gamma(m,p)\ll 4^{1/\delta}$.

  We now consider a stronger version of Waring's number, namely,
  the distinct or subset version of Waring's number. Let $\gamma'(m,p)$ denote the distinct Waring's
number $(\!\!\!\!\mod p)$, the smallest positive integer $k$ such
that every integer is a sum of m-th power of $k$ distinct elements
$(\!\!\!\!\mod p)$. Note that there are big differences between the
two Waring's numbers $\gamma(m,p)$ and $\gamma'(m,p)$. For example,
$\gamma'(m,p)$ does not exist $k$ is too large.
\begin{prop}There is a constant $\epsilon(\delta)>0$ such that for
any prime $p$ and any $m<p^{1-\delta}$, if
$\epsilon^{-1}<(e-1)p^{\delta-\epsilon}$, then we have
$$ \gamma'(m,p)<\epsilon^{-1}.$$
\end{prop}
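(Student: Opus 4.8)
The plan is to deduce the statement directly from the preceding Corollary (the one asserting that the distinct equation $x_1^m+\cdots+x_k^m=b$ has a solution whenever $\epsilon^{-1}<k<(e-1)p^{\delta-\epsilon}$). First I would fix the constant $\epsilon=\epsilon(\delta)>0$ furnished by Theorem~\ref{thm1.0} (equivalently by the Corollary); this is the same $\epsilon$ that will appear in the conclusion. Under the hypothesis $\epsilon^{-1}<(e-1)p^{\delta-\epsilon}$, the open interval $(\epsilon^{-1},(e-1)p^{\delta-\epsilon})$ is nonempty, so I may pick an integer $k$ in it — for definiteness take $k=\lceil\epsilon^{-1}\rceil$ if that value is strictly less than $(e-1)p^{\delta-\epsilon}$, adjusting by a unit if we need strict inequality on the left. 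For this particular $k$, the Corollary guarantees that for \emph{every} $b\in{\bf F}_p$ the equation $x_1^m+\cdots+x_k^m=b$ with the $x_i\in{\bf F}_p^*$ pairwise distinct is solvable.

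Next I would observe that solvability of this equation for all $b$ is exactly the statement that every residue class mod $p$ is a sum of $m$-th powers of $k$ distinct nonzero elements, i.e. that $k$ is an admissible value in the definition of $\gamma'(m,p)$. Hence $\gamma'(m,p)$ exists and $\gamma'(m,p)\le k$. Combining with the choice $k<(e-1)p^{\delta-\epsilon}$ and, more to the point, with the fact that we could take any integer $k$ just above $\epsilon^{-1}$, we get $\gamma'(m,p)\le \lceil\epsilon^{-1}\rceil$, and after absorbing the ceiling into the constant (replacing $\epsilon$ by a slightly smaller admissible constant, still depending only on $\delta$) we obtain the clean bound $\gamma'(m,p)<\epsilon^{-1}$ as stated.

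One subtlety worth spelling out is the role of the hypothesis $\epsilon^{-1}<(e-1)p^{\delta-\epsilon}$: it is precisely what makes the admissible range for $k$ in the Corollary nonempty, and it is the reason the bound is stated conditionally. I would remark that this condition holds for all sufficiently large $p$ once $\delta$ is fixed (since $p^{\delta-\epsilon}\to\infty$), so the bound is genuinely a statement for the range $m<p^{1-\delta}$ with $p$ not too small. The only real work has already been done in Theorem~\ref{thm1.0} and its combinatorial/sieve consequence; the present deduction is a matter of choosing an integer $k$ in the guaranteed interval and reading off the definition of $\gamma'(m,p)$. I do not anticipate a genuine obstacle here — the ``hard part'' is entirely upstream in establishing the counting estimate \eqref{6} and extracting from it the existence of a distinct solution — so the proof of this final statement is short.
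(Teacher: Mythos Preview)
Your proposal is correct and matches the paper's own approach: the paper does not give a separate proof of this proposition, treating it instead as an immediate consequence of the preceding Corollary (Corollary~\ref{cor3.3}), which is exactly what you do---pick an integer $k$ in the guaranteed interval $(\epsilon^{-1},(e-1)p^{\delta-\epsilon})$, invoke the Corollary to get solvability for every $b$, and read off $\gamma'(m,p)\le k$. Your remark about absorbing the ceiling into the constant is a standard and acceptable move at this level of precision (and indeed the paper itself is no more careful on this point).
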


Obviously $\gamma(m,p)\leq\gamma'(m,p)$ and thus this bound implies
Corollary \ref{prop1.3}, the known constant bound for ordinary
Waring's number.

Now we turn to the case for finite fields. Let ${\bf F}_q$ be the
finite field of order $q$ and of characteristic $p$. Let
$\gamma(m,q)$ denote the Waring's number in ${\bf F}_q$, the
smallest positive integer $k$ such that every element in ${\bf
F}_q^*$ is a sum of m-th power in ${\bf F}_q$. The work of A.
Winterhof \cite{WV} shows that
$$\gamma(m,q)\ll\frac{\log q}{\log p}m^{\log p/\log q}\log m$$
and J. Cipra  \cite{C} improved this bound to
$$\gamma(m,q)\ll\frac{\log q}{\log p}m^{\log p/\log q}.$$
Recently, Cochrane and Cipra \cite{CC} proved that
$$\gamma(m,q)\leq 633(2m)^{\frac{\log 4}{\log p-\log m}},$$  provided $m<p$ and $\gamma(m,q)$
exists.

 Similarly let $\gamma'(m,q)$ denote the distinct Waring's
number over ${\bf F}_q$, the smallest positive integer $k$ such that
every element in ${\bf F}_q$ is a sum of m-th power of distinct
elements in ${\bf F}_q^*$. Clearly $\gamma(m,q)\leq\gamma'(m,q)$.
The bound (\ref{3}) given by Zhu-Wan can improve the above bound for
 Waring's number over finite fields. Using (\ref{3}), Zhu
and Wan obtained:
\begin{prop}\cite{W} There is an effectively computable absolute constant $0 <
c < 1$ such that if $m<c\sqrt{q}$  and $6\ln{q} <k<(q-1)/2$ then
$N_m^*(k, b)
> 0$ for all $b\in {\bf F}_q$.
\end{prop}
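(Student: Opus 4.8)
The plan is to deduce the statement directly from the Zhu--Wan estimate (\ref{3}) by comparing the main term with the error term. From (\ref{3}),
$$N_m^*(k,b)\ \geq\ q^{-1}{q-1 \choose k}-2q^{-1/2}{m\sqrt q+q/p+k \choose k},$$
so it is enough to prove that ${q-1 \choose k}>2\sqrt q\,{T+k \choose k}$ for every integer $k$ with $6\ln q<k<(q-1)/2$, where I abbreviate $T:=m\sqrt q+q/p$. First I would pin down the constant $c$: since $p\geq3$ and $m<c\sqrt q$ one has $T<cq+q/3$, so choosing $c$ small enough that $c+\tfrac13<\tfrac12$ (for instance $c=\tfrac1{12}$) forces $T<q/2$, uniformly in $p$ and regardless of whether $q$ is prime or a proper prime power.

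Next I would use the shape of the ratio $R(k):={q-1 \choose k}\big/\bigl(2\sqrt q\,{T+k \choose k}\bigr)$. A short computation gives $R(k+1)/R(k)=(q-1-k)/(T+1+k)$, which is $>1$ exactly for $k<(q-T-2)/2$ and $<1$ afterwards; hence $\bigl(R(k)\bigr)_k$ is unimodal, so its minimum over the integers in $(6\ln q,(q-1)/2)$ is attained at one of the two endpoints $k_1:=\lceil 6\ln q\rceil$ and $k_2:=(q-3)/2$, and it suffices to check $R(k_1)>1$ and $R(k_2)>1$. At the small endpoint the factors of $\prod_{j=1}^{k_1}\frac{q-j}{T+j}$ are decreasing in $j$, so the product is at least $\bigl(\frac{q-k_1}{T+k_1}\bigr)^{k_1}$; since $T<q/2$ and $k_1\asymp\ln q=o(q)$, this base exceeds $6/5$ for $q$ large, which gives $R(k_1)\geq\tfrac12\,q^{\,6\ln(6/5)-1/2}>1$ because $6\ln(6/5)>\tfrac12$. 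At the large endpoint I would combine the central binomial lower bound ${q-1 \choose k_2}\gg 2^{q}/\sqrt q$ with the entropy upper bound ${T+k_2 \choose k_2}\leq 2^{(T+k_2)H(k_2/(T+k_2))}$, where $H(x)=-x\log_2 x-(1-x)\log_2(1-x)$; as $T+k_2<q$ and $T<q/2$, this yields ${T+k_2 \choose k_2}\leq 2^{\theta q}$ for some explicit $\theta<1$, hence $R(k_2)\gg 2^{(1-\theta)q}/q\to\infty$. This settles both endpoint inequalities for all $q$ past an explicit bound; the finitely many prime powers below that bound for which the interval $(6\ln q,(q-1)/2)$ contains an integer I would dispatch by direct computation (for most of them the interval is already empty). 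Unimodality then propagates $R(k)>1$ to the whole range, so $N_m^*(k,b)>0$.

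The step I expect to be the main obstacle is calibrating the two constants so that both endpoint estimates succeed simultaneously. The coefficient $6$ in $6\ln q$ must be large enough that $6\ln(6/5)>\tfrac12$, so that the product of the handful of factors near the small endpoint overcomes the error factor $2\sqrt q$; and $c$ must be taken small, in particular well below $1/6$, so that $T=m\sqrt q+q/p$ stays a fixed fraction of $q$ strictly below $\tfrac12$. The delicate point behind this last requirement is that when $q=p^{r}$ with $r\geq 2$ the term $q/p=p^{\,r-1}$ is not negligible --- for $p=3$ it equals $q/3$ --- so it has to be controlled on its own rather than absorbed into the $m\sqrt q$ term.
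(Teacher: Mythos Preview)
Your argument is correct. The paper does not supply its own proof of this corollary; it attributes the result to Zhu--Wan \cite{W} and simply records that it is obtained ``using (\ref{3})'', which is exactly the route you take. Your unimodality-plus-endpoint analysis is a clean way to carry out that deduction, and your handling of the $q/p$ term (which forces $c$ well below $1/2$ when $p=3$) is the right observation. One minor simplification: rather than appealing to direct computation for small $q$, you can shrink $c$ further so that the constraint $1\le m<c\sqrt{q}$ already forces $q$ past any fixed threshold; this keeps $c$ effectively computable while avoiding a case check.
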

This certainly implies a sharper bound at some cases:
\begin{prop} There is a constant $c>0$ such that if $m<c\sqrt{q}$
$$\gamma(m,q)\leq\gamma'(m,q)<\lfloor 6\ln{q}\rfloor+1.$$
\end{prop}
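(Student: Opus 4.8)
The plan is to read the statement off directly from the preceding Proposition of Zhu and Wan, once one unwinds the definitions. Let $c$ be the effectively computable absolute constant furnished there --- shrunk if necessary, as explained below --- and suppose $m<c\sqrt{q}$.

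First, the inequality $\gamma(m,q)\le\gamma'(m,q)$ is a triviality: an element of ${\bf F}_q$ that is a sum of $m$-th powers of $k$ \emph{distinct} elements of ${\bf F}_q^*$ is in particular a sum of $k$ $m$-th powers, so every $k$ that is admissible in the definition of $\gamma'(m,q)$ is admissible in that of $\gamma(m,q)$, whence $\gamma(m,q)\le\gamma'(m,q)$. Second, observe that the assertion ``$N_m^*(k,b)>0$ for all $b\in{\bf F}_q$'' of the preceding Proposition says precisely that every element of ${\bf F}_q$ is a sum of $m$-th powers of $k$ distinct nonzero elements; hence exhibiting a \emph{single} integer $k$ with this property already gives $\gamma'(m,q)\le k$ by the definition of $\gamma'$.

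So it remains to produce one such $k$ that is both admissible in the preceding Proposition and bounded by the claimed quantity. I would take $k:=\lfloor 6\ln q\rfloor+1$, the least integer strictly exceeding $6\ln q$; the lower bound $6\ln q<k$ of the admissible window holds by construction. For the upper bound of that window, $k\le 6\ln q+1<(q-1)/2$ for all $q$ beyond an absolute threshold $q_{0}$, and for the finitely many $q\le q_{0}$ the interval $\bigl(6\ln q,(q-1)/2\bigr)$ is empty, so the preceding Proposition (and with it the present statement) is vacuous there; one therefore takes $c$ small enough that $c\sqrt{q}\le 1$ for $q\le q_{0}$, which since $m\ge1$ makes the hypothesis $m<c\sqrt q$ vacuous on that finite set and disposes of the degenerate range, in which $\gamma'(m,q)$ may not even be defined. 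For $q>q_{0}$ with $m<c\sqrt q$, the preceding Proposition now applies to $k=\lfloor 6\ln q\rfloor+1$ and gives $N_m^*(k,b)>0$ for every $b$, so $\gamma(m,q)\le\gamma'(m,q)\le\lfloor 6\ln q\rfloor+1$, which is the displayed bound (the strict sign being the customary shorthand for this ``$\le$'').

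There is no real obstacle here beyond this boundary bookkeeping: the analytic content is entirely contained in the quoted positivity bound for $N_m^*(k,b)$, and what remains is only to check that the single chosen exponent lands in the Zhu--Wan window and that the small-$q$ exceptions are absorbed into the choice of $c$.
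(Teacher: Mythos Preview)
Your argument is correct and matches the paper's approach: the paper simply states that the preceding Zhu--Wan positivity result ``certainly implies'' this bound, without further detail, and you have supplied exactly the bookkeeping needed (choosing $k=\lfloor 6\ln q\rfloor+1$ inside the admissible window and absorbing small $q$ into $c$). Your remark that the strict ``$<$'' should be read as ``$\le$'' is well taken; with this choice of $k$ one obtains only $\gamma'(m,q)\le\lfloor 6\ln q\rfloor+1$, and the displayed strict inequality in the paper appears to be a slip.
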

This paper is organized as follows. Proof of the main result will be
given in Section 3 and a distinct coordinate sieving method will be
introduced briefly in Section 2.
% Applications to Waring's problem will be given in Section 3.

{\bf Notations}. For $x\in\mathbb{R}$, let  $(x)_0=1$
 and $(x)_k=x (x-1) \cdots (x-k+1)$
for $k\in $ $\mathbb{Z^+}$. For $k\in \mathbb{N}$, ${x \choose k}$
is the binomial coefficient defined by ${x \choose k}=\frac
{(x)_k}{k!}$.

\section{A distinct coordinate sieving formula}
In this section we introduce a sieving formula discovered by Li-Wan
\cite{LW2}, which significantly improves the classical
inclusion-exclusion sieve in many interesting cases. We cite it here
without any proof. For details and related applications please refer
to \cite{LW2,LW3}.

Let $D$ be a finite set, and let $D^k$ be the Cartesian product of
$k$ copies of $D$. Let $X$ be a
subset of $D^k$. %In many situations we are interested in counting
%the number of elements in the set
Define $\overline{X}=\{(x_1,x_2,\cdots,x_k)\in X \ | \ x_i\ne x_j,
\forall i\ne j\}.$
% \end{align*}
Let $f(x_1,x_2,\dots,x_k)$ be a complex valued function defined over
$X$ and
%\begin{eqnarray}
$$F=\sum_{x \in \overline{X}}f(x_1,x_2,\dots,x_k).\label{1.00}$$
%\end{eqnarray}
%Note that if we let $f(x_1,x_2,\dots,x_k)\equiv1$, then $F$ is the
%number of elements in $\overline{X}$.

 Let $S_k$ be the symmetric group on $\{1,2,\cdots, k\}$.
Each permutation $\tau\in S_k$ factorizes uniquely %(up to the
%  order of the factors)
as a product of disjoint cycles and  each
  fixed point is viewed as a trivial cycle of length $1$.
 % For simplicity of the notation, we usually omit the $1$-cycles.
   Two permutations in $S_k$ are conjugate if and only if they
have the same type of cycle structure (up to the order).
 For $\tau\in S_k$, define the sign of $\tau$ to
  $\sign(\tau)=(-1)^{k-l(\tau)}$, where $l(\tau)$ is the number of
 cycles of $\tau$ including the trivial cycles.
 For a permutation $\tau=(i_1i_2\cdots i_{a_1})
  (j_1j_2\cdots j_{a_2})\cdots(l_1l_2\cdots l_{a_s})$
  with $1\leq a_i, 1 \leq i\leq s$, define
  \hskip 1.0cm
  \begin{align} \label{1.1}
     X_{\tau}=\left\{
(x_1,\dots,x_k)\in X,
 x_{i_1}=\cdots=x_{i_{a_1}},\cdots, x_{l_1}=\cdots=x_{l_{a_s}}
 \right\}.
\end{align}
% Each element of $X_{\tau}$ is said to be of type $\tau$.
 Similarly, for $\tau \in S_k$,  define $F_{\tau}=\sum_{x \in
X_{\tau} } f(x_1,x_2,\dots,x_k). $ Now we can state our sieve
formula.   We remark that there are many other interesting
corollaries of this formula. For interested reader we refer to
\cite{LW2}.

\begin{thm} \label{thm1.0}Let $F$ and $F_{\tau}$ be defined as above. Then   \begin{align}
  \label{1.5} F=\sum_{\tau\in S_k}{\sign(\tau)F_{\tau}}.
    \end{align}
 \end{thm}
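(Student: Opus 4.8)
The plan is to prove the identity (\ref{1.5}) by interchanging the order of summation and reducing everything to a classical fact about the sign character of a symmetric group. First I would expand the right-hand side and regroup the double sum according to the point $x\in X$ rather than the permutation $\tau$:
\[
\sum_{\tau\in S_k}\sign(\tau)F_\tau
=\sum_{\tau\in S_k}\sign(\tau)\sum_{x\in X_\tau}f(x)
=\sum_{x\in X}f(x)\,c(x),\qquad c(x):=\sum_{\substack{\tau\in S_k\\ x\in X_\tau}}\sign(\tau).
\]
Thus the whole statement reduces to showing that the integer weight $c(x)$ equals $1$ when $x\in\overline X$ and $0$ otherwise; once this is established, the right-hand side collapses to $\sum_{x\in\overline X}f(x)=F$.

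Second, I would determine which permutations $\tau$ actually contribute to $c(x)$. Fix $x=(x_1,\dots,x_k)\in X$ and let the fibres of the map $i\mapsto x_i$ partition $\{1,\dots,k\}$ into blocks $B_1,\dots,B_r$ with $|B_t|=k_t$ and $\sum_t k_t=k$. By the definition (\ref{1.1}) of $X_\tau$, the condition $x\in X_\tau$ holds exactly when the coordinates forced equal by $\tau$ are genuinely equal in $x$, i.e.\ when every cycle of $\tau$ is contained in a single block $B_t$; equivalently, $\tau$ lies in the Young subgroup $S_{B_1}\times\cdots\times S_{B_r}\cong S_{k_1}\times\cdots\times S_{k_r}$. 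For such a $\tau=(\tau_1,\dots,\tau_r)$ one has $k-l(\tau)=\sum_t\bigl(k_t-l(\tau_t)\bigr)$, so $\sign$ is multiplicative over the product, giving
\[
c(x)=\prod_{t=1}^{r}\Bigl(\sum_{\sigma\in S_{k_t}}\sign(\sigma)\Bigr).
\]

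Third, I would invoke the elementary identity $\sum_{\sigma\in S_n}\sign(\sigma)=1$ for $n\in\{0,1\}$ and $\sum_{\sigma\in S_n}\sign(\sigma)=0$ for $n\ge 2$ (when $n\ge2$, left multiplication by a fixed transposition is a fixed-point-free, sign-reversing involution on $S_n$, pairing the permutations off). Hence the product defining $c(x)$ equals $1$ precisely when every block $B_t$ is a singleton — which is exactly the assertion that the coordinates of $x$ are pairwise distinct, i.e.\ $x\in\overline X$ — and equals $0$ as soon as some value of $x$ is repeated. Substituting back into the expansion above yields $\sum_{\tau\in S_k}\sign(\tau)F_\tau=\sum_{x\in\overline X}f(x)=F$.

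Since the argument uses no estimates whatsoever, there is no analytic obstacle; the only place that demands care is the combinatorial bookkeeping in the second step — checking that $x\in X_\tau$ really forces $\tau$ to respect the block partition of $x$ (not merely to preserve the union of its blocks), and that under the isomorphism $S_{B_1}\times\cdots\times S_{B_r}\cong\prod_t S_{k_t}$ the global sign $(-1)^{k-l(\tau)}$ factors correctly as $\prod_t(-1)^{k_t-l(\tau_t)}$. Once that identification is set up cleanly, the vanishing of $\sum_{\sigma\in S_n}\sign(\sigma)$ for $n\ge 2$ does all the remaining work.
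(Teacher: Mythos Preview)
Your argument is correct: the interchange of summation, the identification of $\{\tau:x\in X_\tau\}$ with the Young subgroup $S_{B_1}\times\cdots\times S_{B_r}$ attached to the level sets of $i\mapsto x_i$, the multiplicativity of $\sign$ across that product, and the vanishing of $\sum_{\sigma\in S_n}\sign(\sigma)$ for $n\ge 2$ together give exactly $c(x)=\mathbf 1_{\overline X}(x)$, and nothing more is needed.

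The paper itself does not prove this theorem at all; it is quoted from the earlier Li--Wan papers \cite{LW2,LW3} with the explicit remark ``We cite it here without any proof.'' So there is no comparison to be made with the paper's own argument. What you have written is in fact the standard proof of the Li--Wan sieve (and is essentially the argument given in those references), so your proposal fills in precisely what the present paper chose to omit.
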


Note that the symmetric group $S_k$ acts on $D^k$ naturally by
permuting coordinates. That is, for $\tau\in S_k$ and
$x=(x_1,x_2,\dots,x_k)\in D^k$,  $\tau\circ
x=(x_{\tau(1)},x_{\tau(2)},\dots,x_{\tau(k)}).$
% Before stating a more useful corollary, we need two definitions first.
  A subset $X$ in $D^k$ is said to be symmetric if for any $x\in X$ and
any $\tau\in S_k$, $\tau\circ x \in X $.
%
%\begin{defn}Let $X\subseteq D^k$ and assume $X$ is symmetric.
%A complex-valued function $f(x_1,x_2,\dots, x_k)$ defined over $X$
%is called normal on $X$ if for any two $S_k$-conjugate elements
%$\tau$ and $\tau'$ in $S_k$ (thus $\tau$ and $\tau'$ have the same
%type), we have
%$$\sum_{x\in X_{\tau}
%} f(x_1,x_2,\dots,x_k)=\sum_{x\in X_{\tau'} }
%f(x_1,x_2,\dots,x_k).$$
% \end{defn}
%
%
%\textbf{Remark.}
%  If $f(x_1,x_2,\dots,x_k)$ is a symmetric function and $X$ is symmetric,
% then $f(x_1,x_2,\dots,x_k)$ must be normal on $X$.
 %and note that $|C_k|=p(k)$, the partition function.
 For $\tau\in S_k$, denote by $\overline{\tau}$
 the conjugacy class determined by $\tau$ and it can
also be viewed as the set of permutations conjugate to $\tau$.
Conversely, for given conjugacy class $\overline{\tau}\in C_k$,
denote by $\tau$ a representative permutation of this class. For
convenience we usually identify these two symbols.

In particular,  if  $X$ is symmetric and $f$ is a symmetric function
under the action of $S_k$,  we then have the following simpler
formula than (\ref{1.5}).
\begin{prop} \label{thm1.1} Let $C_k$ be the set of conjugacy  classes
 of $S_k$.  If $X$ is symmetric and $f$ is symmetric, then
 \begin{align}\label{7} F=\sum_{\tau \in C_k}\sign(\tau) C(\tau)F_{\tau},
  \end{align} where $C(\tau)$ is the number of permutations conjugate to
  $\tau$.
 %  and the summation is through the representative
%permutations in $C_k$.
%%
%If $f$ is strongly normal on $X$,  for given $1 \leq i\leq k$, we
%choose $\tau_i\in S_k$ satisfying $l(\tau_i)=i$, and let
%$$F_{i}=\sum_{x\in X_{\tau_i}
%} f(x_1,x_2,\cdots,x_k)$$ (this is independent of the choice of
%$\tau_i$),  then we have
% \begin{eqnarray}
%F=\sum_{i=1}^{k}(-1)^{k-i}c(k,i)F_{i},
% \end{eqnarray}
%where $c(k,i)$ is the signless Stirling number of the first kind,
%that is,  the number of permutations in $S_k$ with exactly $i$
%cycles.
\end{prop}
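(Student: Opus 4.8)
The plan is to derive \eqref{7} from \eqref{1.5} by collecting the terms $\sign(\tau)F_\tau$ over each conjugacy class. The key observation is that when $X$ is symmetric and $f$ is symmetric, the quantity $F_\tau$ depends only on the conjugacy class $\overline\tau$ of $\tau$, not on $\tau$ itself; since $\sign(\tau)=(-1)^{k-l(\tau)}$ also depends only on the cycle type, all $C(\tau)$ permutations in a given class contribute the identical summand $\sign(\tau)F_\tau$, and summing them gives the factor $C(\tau)$ in \eqref{7}.

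First I would make precise the claim that $F_\tau=F_{\sigma\tau\sigma^{-1}}$ for every $\sigma\in S_k$. Writing $\tau=(i_1\cdots i_{a_1})\cdots(l_1\cdots l_{a_s})$, the conjugate $\sigma\tau\sigma^{-1}$ has cycle decomposition $(\sigma(i_1)\cdots\sigma(i_{a_1}))\cdots(\sigma(l_1)\cdots\sigma(l_{a_s}))$, so by \eqref{1.1} the set $X_{\sigma\tau\sigma^{-1}}$ consists of those $(x_1,\dots,x_k)\in X$ with $x_{\sigma(i_1)}=\cdots=x_{\sigma(i_{a_1})}$, and so on. I would check that the coordinate permutation $x\mapsto \sigma^{-1}\circ x$ (equivalently, relabelling by $\sigma$) is a bijection $X_{\tau}\to X_{\sigma\tau\sigma^{-1}}$: it maps $X$ to $X$ because $X$ is symmetric, and it sends the equality pattern indexed by $\tau$ to the one indexed by $\sigma\tau\sigma^{-1}$. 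Because $f$ is symmetric, $f(\sigma^{-1}\circ x)=f(x)$, hence
\begin{align*}
F_{\sigma\tau\sigma^{-1}}=\sum_{y\in X_{\sigma\tau\sigma^{-1}}}f(y)=\sum_{x\in X_\tau}f(\sigma^{-1}\circ x)=\sum_{x\in X_\tau}f(x)=F_\tau .
\end{align*}
Thus $\tau\mapsto F_\tau$ is a class function, and since $\sign(\tau)=(-1)^{k-l(\tau)}$ depends only on the number of cycles, $\tau\mapsto \sign(\tau)F_\tau$ is constant on each conjugacy class.

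With this in hand, I would partition $S_k=\bigsqcup_{\overline\tau\in C_k}\overline\tau$ and regroup the sum in \eqref{1.5}:
\begin{align*}
F=\sum_{\tau\in S_k}\sign(\tau)F_\tau=\sum_{\overline\tau\in C_k}\ \sum_{\sigma\in\overline\tau}\sign(\sigma)F_\sigma=\sum_{\overline\tau\in C_k}|\overline\tau|\,\sign(\tau)F_\tau,
\end{align*}
where in the last step the inner summand is the common value $\sign(\tau)F_\tau$ for any representative $\tau$ of the class, and $|\overline\tau|$ is exactly the number $C(\tau)$ of permutations conjugate to $\tau$. Identifying $\overline\tau$ with its representative $\tau$ as agreed in the text, this is precisely \eqref{7}.

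The only genuinely substantive point — and the step I would be most careful with — is verifying that the relabelling map is a well-defined bijection $X_\tau\to X_{\sigma\tau\sigma^{-1}}$ that respects both the ambient set $X$ and the imposed coordinate-equality constraints; everything else is bookkeeping. The symmetry of $X$ handles the former and a direct translation of the cycle notation under conjugation handles the latter, so there is no real obstacle, only the need to state the index manipulation cleanly.
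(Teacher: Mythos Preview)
Your argument is correct and is exactly the natural way to derive this corollary from Theorem~\ref{thm1.0}: show that under the symmetry hypotheses the summand $\sign(\tau)F_\tau$ is constant on conjugacy classes via the bijection $X_\tau\to X_{\sigma\tau\sigma^{-1}}$, $x\mapsto\sigma^{-1}\circ x$, and then collapse the sum over $S_k$ to a sum over $C_k$ weighted by class sizes. Note, however, that the paper itself does not supply a proof of this statement at all; it explicitly cites the result from \cite{LW2} without proof, so there is no ``paper's own proof'' to compare against---your proof simply fills in what the paper omits.
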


For the purpose of our proof, we will also need a combinatorial
formula.  A permutation $\tau\in S_k$ is said to be of type
$(c_1,c_2,\cdots,c_k)$ if $\tau$ has exactly $c_i$ cycles of length
$i$.  Note that $\sum_{i=1}^k ic_i=k$. Let $N(c_1,c_2,\dots,c_k)$ be
the number  of permutations in $S_k$ of type $(c_1,c_2,\dots,c_k)$
and it is well-known  that
$$N(c_1,c_2,\dots,c_k)=\frac {k!} {1^{c_1}c_1! 2^{c_2}c_2!\cdots k^{c_k}c_k!}.$$

\begin{lem} \label{lem2.6}
Define the generating function
\begin{align*}C_k(t_1,t_2,\dots,t_k)= \sum_{\sum
ic_i=k} N(c_1,c_2,\dots,c_k)t_1^{c_1}t_2^{c_2}\cdots t_k^{c_k}.
 \end{align*}
If $t_1=t_2=\cdots=t_k=q$, then we have
\begin{align} \label{6.2}
C_k(q,q,\dots,q) &=\sum_{\sum
ic_i=k} N(c_1,c_2,\dots,c_k)q^{c_1}q^{c_2}\cdots q^{c_k}\nonumber \\
&=(q+k-1)_k.
 \end{align}
\end{lem}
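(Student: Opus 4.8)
The plan is to recognize $C_k(q,q,\dots,q)$ as the coefficient extraction from the exponential generating function for permutations weighted by number of cycles. Recall the exponential formula: if we mark each cycle (of any length) by a variable $u$, then
\begin{align*}
\sum_{k\geq 0}\frac{x^k}{k!}\sum_{\tau\in S_k}u^{l(\tau)}=\exp\left(u\sum_{i\geq 1}\frac{x^i}{i}\right)=\exp\left(-u\log(1-x)\right)=(1-x)^{-u}.
\end{align*}
Setting all $t_i=q$ in $C_k(t_1,\dots,t_k)$ gives exactly $\sum_{\tau\in S_k}q^{l(\tau)}=\sum_{\sum ic_i=k}N(c_1,\dots,c_k)q^{c_1+\cdots+c_k}$, since $l(\tau)=c_1+\cdots+c_k$ for a permutation of type $(c_1,\dots,c_k)$. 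Therefore $C_k(q,\dots,q)=k!\,[x^k](1-x)^{-q}$.

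Next I would compute $[x^k](1-x)^{-q}$ by the generalized binomial theorem: $(1-x)^{-q}=\sum_{k\geq 0}\binom{-q}{k}(-x)^k=\sum_{k\geq 0}\binom{q+k-1}{k}x^k$. Hence $[x^k](1-x)^{-q}=\binom{q+k-1}{k}=\frac{(q+k-1)_k}{k!}$, using the falling-factorial notation $(q+k-1)_k=(q+k-1)(q+k-2)\cdots q$ fixed in the Notations. Multiplying by $k!$ yields $C_k(q,\dots,q)=(q+k-1)_k$, which is the claim.

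Alternatively, to keep the argument self-contained and avoid invoking the exponential formula, I would give a short combinatorial proof that $\sum_{\tau\in S_k}q^{l(\tau)}=(q+k-1)_k=q(q+1)\cdots(q+k-1)$ when $q$ is a positive integer (the identity between the two polynomials in $q$ of degree $k$ then follows since they agree at infinitely many points). This is the standard fact that the signless Stirling numbers of the first kind $c(k,j)=\#\{\tau\in S_k:l(\tau)=j\}$ satisfy $\sum_j c(k,j)q^j=q^{(k)}$, the rising factorial; one proves it by induction on $k$, inserting the element $k$ either as a new fixed point (contributing the factor $q$) or into one of the $k-1$ "gaps" of an existing cycle structure on $\{1,\dots,k-1\}$ (contributing the factor $k-1$), giving the recursion $q^{(k)}=(q+k-1)\,q^{(k-1)}$.

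The main obstacle here is essentially bookkeeping rather than mathematics: one must be careful that the sum defining $C_k(q,\dots,q)$ really is $\sum_{\tau}q^{l(\tau)}$ and not $\sum_\tau q^{k-l(\tau)}$ or some other weighting, i.e., that the exponent collected is $\sum_i c_i$ and that $N(c_1,\dots,c_k)$ correctly counts permutations (which is the well-known formula quoted just before the lemma), and that the falling-factorial convention matches $(q+k-1)_k=\prod_{j=0}^{k-1}(q+k-1-j)=\prod_{i=0}^{k-1}(q+i)$. Once these identifications are pinned down, either route closes immediately.
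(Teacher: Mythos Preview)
Your argument is correct. Both routes you outline --- the exponential-formula/generating-function computation and the inductive proof via the recursion for signless Stirling numbers of the first kind --- are standard and valid, and your bookkeeping (that the exponent is $\sum_i c_i = l(\tau)$ and that $(q+k-1)_k = q(q+1)\cdots(q+k-1)$ in the paper's falling-factorial convention) is right.

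Note, however, that the paper does not actually supply a proof of this lemma: it is simply stated as a known combinatorial identity, with the surrounding sieve material cited from \cite{LW2,LW3}. So there is no ``paper's own proof'' to compare against; your write-up would serve as a complete self-contained justification where the paper gives none.
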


\section{Proof of Theorem 1.1}

%Before the proof we state several lemmas.
%\begin{lem}Let $D$ be a subset of  ${\bf F}_p^*$. Let
%$M(k, b, D)$ be the number of ordered solutions of the equation
%$$x_1+x_2+\cdots+x_k=b, \ x_i\in D,\  x_i\neq x_j, \ i\ne j.$$
%Then $$\left| M(k, b, D)-p^{-1}{(|D|)_k}\right|\leq
%(\Phi(D)+k-1)_k.$$ where $\Phi(D)=
% \max_{\chi\ne id}\left |\sum_{x\in D}\chi(x) \right|,$
%and $\chi$ varies in the character group of ${\bf F}_p$.
%\end{lem}
Let $D\subseteq {\bf F}_p^*$ be a nonempty subset of cardinality
$n$.  Let $\chi_a=e_p(ax)=e^{2\pi i a x/p}$ be an additive character
over ${\bf F}_p$ and $\chi_0$ be the principal character sending
each element in ${\bf F}_p$ to 1. Denote by  $\widehat{{\bf F}}_p$
is the group of additive characters of ${\bf F}_p$. Define
$\Phi(D)=\max_{\chi\in \widehat{{\bf F}}_p, \chi\ne \chi_0}\left
|\sum_{a\in D}\chi{(a)}\right |.$   Let $N(k, b, D)$ be the number
of $k$-subsets $S\subseteq D$ such that $\sum_{x\in S}x=b$.
%solutions of the equation
%$$x_1+x_2+\cdots+x_k=b, x_i\in D, x_i\neq x_j, i\ne j.$$
In the following lemma we will give an asymptotic bound on $N(k, b,
D)$ when $\Phi({D})$ is small compared to $n=|D|$.

 \begin{lem}\label{lem1.1}Let $N(k, b, D)$ be defined as above. Then
$$\left| N(k, b, D)-p^{-1}{n \choose k}\right|\leq
{\Phi(D)+k-1 \choose k}.$$
%Let $D$ be a subset of  ${\bf F}_p^*$. Let
% $M(k, b, D)$ be the number of ordered solutions of the
%equation $$x_1+x_2+\cdots+x_k=b, x_i\in D, x_i\neq x_j, i\ne j.$$
%Then we have
  \end{lem}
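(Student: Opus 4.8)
The plan is to estimate $N(k,b,D)$ by expressing the constraint $\sum_{x\in S}x=b$ through additive characters and then applying the distinct coordinate sieving formula from Theorem~2.1. First I would introduce, for an ordered tuple $(x_1,\dots,x_k)\in D^k$, the orthogonality relation
\begin{align*}
\frac{1}{p}\sum_{a=0}^{p-1} e_p\!\left(a\Big(\sum_{i=1}^k x_i - b\Big)\right) = \begin{cases} 1 & \text{if } \sum x_i = b,\\ 0 & \text{otherwise.}\end{cases}
\end{align*}
Since an (unordered) $k$-subset $S$ with distinct elements corresponds to exactly $k!$ ordered tuples with pairwise distinct coordinates, we get $k!\,N(k,b,D) = \sum_{x\in\overline{D^k}} \frac{1}{p}\sum_a e_p(a(\sum x_i - b))$, where $\overline{D^k}$ is the set of tuples with distinct coordinates. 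Set $f(x_1,\dots,x_k)=\frac1p\sum_{a}e_p(a(\sum x_i - b))$; this is a symmetric function and $D^k$ is a symmetric set, so Corollary~2.2 (equation~(2.7)) applies.

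Next I would compute $F_\tau$ for a permutation $\tau$ of type $(c_1,\dots,c_k)$. Collapsing coordinates along the cycles of $\tau$ means summing over tuples constant on each cycle; a cycle of length $i$ contributes a variable $y$ ranging over $D$, weighted by $e_p(a\,i\,y)$ in the character sum. Hence
\begin{align*}
F_\tau = \frac1p\sum_{a=0}^{p-1} e_p(-ab) \prod_{i=1}^k \Big(\sum_{y\in D} e_p(a\,i\,y)\Big)^{c_i}.
\end{align*}
The $a=0$ term gives $\frac1p n^{\sum c_i}$ (writing $n=|D|$), and after summing over all conjugacy classes with signs, Lemma~2.3 with the substitution "$q = n$" for the $a=0$ part reconstructs exactly $\frac{k!}{p}\binom{n}{k}$, because $\sum_\tau \sign(\tau)C(\tau) n^{l(\tau)} = (n)_k = k!\binom nk$ (this is the signed version of Lemma~2.3, or equivalently one checks it directly). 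So the main term is handled cleanly, and what remains is to bound the contribution of the $a\neq 0$ terms.

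For the error term, for each nonzero $a$ we have $|\sum_{y\in D}e_p(a i y)| \le \Phi(D)$ whenever $p\nmid ai$; since $1\le i\le k < p$ and $p\nmid a$, indeed $p\nmid ai$, so every factor is bounded by $\Phi(D)$. Therefore the absolute value of the total error is at most
\begin{align*}
\frac1p\sum_{a=1}^{p-1}\sum_{\tau\in S_k}\Phi(D)^{l(\tau)} = \frac{p-1}{p}\sum_{\tau\in S_k}\Phi(D)^{l(\tau)} \le \sum_{\sum ic_i=k} N(c_1,\dots,c_k)\,\Phi(D)^{c_1+\cdots+c_k},
\end{align*}
and by Lemma~2.3 (with all $t_i$ equal to $\Phi(D)$) this equals $(\Phi(D)+k-1)_k = k!\binom{\Phi(D)+k-1}{k}$. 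Dividing through by $k!$ gives $\left|N(k,b,D) - p^{-1}\binom nk\right| \le \binom{\Phi(D)+k-1}{k}$, as claimed.

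The step I expect to be the main obstacle is making precise that the signed sum over conjugacy classes of the $a=0$ contribution yields exactly $p^{-1}\binom nk$: one must verify the signed identity $\sum_{\tau\in S_k}\sign(\tau)\,n^{l(\tau)} = (n)_k$ (the falling factorial), which is the $t_i\equiv n$ specialization of the sign-weighted cycle-index generating function and follows from Theorem~2.1 applied to the trivial function $f\equiv 1$ over $X=D^k$ — or alternatively from comparing $\sum_\tau \sign(\tau)\prod t_i^{c_i}$ with the known identity $\sum_{\sum ic_i=k} (-1)^{k-\sum c_i} N(c_1,\dots,c_k)\prod q^{c_i} = (q)_k$. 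A secondary technical point is bookkeeping the factor $k!$ consistently between ordered tuples and subsets, and confirming that the factor $\frac{p-1}{p}<1$ only helps, so the stated bound holds without it. Neither obstacle is serious; the argument is essentially a direct substitution into the sieve.
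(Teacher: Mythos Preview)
Your proof is correct and follows essentially the same route as the paper's: orthogonality of additive characters, the distinct-coordinate sieve (Corollary~2.2), the factorization of $F_\tau$ as $\prod_i(\sum_{y\in D}\chi^i(y))^{c_i}$, and Lemma~2.3 to collapse the sum to $(\Phi(D)+k-1)_k$. The only cosmetic difference is that the paper peels off the principal character \emph{before} invoking the sieve, so the main term $p^{-1}(n)_k$ drops out immediately as $p^{-1}|\overline{X}|$ and the signed identity $\sum_{\tau}\sign(\tau)\,n^{l(\tau)}=(n)_k$ you flagged as the ``main obstacle'' is never needed.
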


\begin{proof}
Let $X=D^k=D\times D \times \cdots \times D$ be the Cartesian
product of $k$ copies of $D$.
 Let $  \overline{X} =\left\{ (x_1,x_2,\dots,x_{k} )\in D^k \mid
 x_i\not=x_j,~ \forall i\ne j\} \right\}.$ It is clear that $|X|=n^k$ and
$|\overline{X}|=(n)_k$. Then

\begin{align*}
k!N(k, b, D)&={p^{-1}} \sum_{(x_1, x_2,\dots x_k) \in \overline{X}}
\sum_{\chi\in \widehat{{\bf F}}_p}\chi(x_1+x_2+\cdots +x_k-b)\\
%&={p^{-1}} \sum_{\chi\in \widehat{{\bf F}}_p} \sum_{(x_1, x_2,\dots
%x_k) \in \overline{X}}\chi(x_1+x_2+\cdots +x_k-b)\\
&={p^{-1}} (n)_k+p^{-1} \sum_{\chi\ne \chi_0}\sum_{(x_1,
x_2,\cdots x_k) \in\overline{X}}\chi(x_1)\chi(x_2)\cdots \chi(x_k)\chi^{-1}(b)\\
&={p^{-1}}  {(n)_k}+{p^{-1}} \sum_{\chi\ne
\chi_0}\chi^{-1}(b)\sum_{(x_1,x_2,\dots x_k)
\in\overline{X}}\prod_{i=1}^{k} \chi(x_i).
\end{align*}
For $\chi\ne \chi_0$, let $f_{\chi}(x)=
f_{\chi}(x_1,x_2,\dots,x_{k})= \prod_{i=1}^{k}\chi(x_i),$ and for
  $\tau\in S_k$ let
$$F_{\tau}(\chi)=\sum_{x\in X_{\tau}}f_{\chi}(x)=\sum_{x \in X_{\tau}}\prod_{i=1}^{k} \chi(x_i),$$
where $X_{\tau}$ is defined as in (\ref{1.1}). Obviously $X$ is
symmetric and $f_{\chi}(x_1,x_2,\dots,x_{k})$ is normal on $X$.
Applying (\ref{7}) in Corollary \ref{thm1.1},
  \begin{align*}
 k!N(k, b, D)&={p^{-1}} {(n)_k}+{p^{-1}} \sum_{\chi\ne \chi_0}\chi^{-1}(b) \sum_{\tau\in
C_{k}}\sign(\tau)C(\tau) F_{\tau}(\chi),
  \end{align*}
 where $C_{k}$ is the set of conjugacy classes
 of $S_{k}$, $C(\tau)$ is the number of permutations conjugate to $\tau$, and
 %$F_{\tau}(\chi)=\sum_{x \in X_{\tau}}\prod_{i=1}^{k}\chi(x_i).$
%For  $\tau\in C_{k}$, assume $\tau$ is of type
%$(c_1,c_2,\dots,c_{k})$, where $c_i$ is the number of $i$-cycles in
%$\tau$ for $1 \leq i\leq k$. Note that $\sum_{i=1}^{k} ic_i=k$.
%Assume
%$$\tau=(i_1)(i_2)\cdots(i_{c_1})(i_{c_1+1}i_{c_1+2})(i_{c_1+3}i_{c_1+4})\cdots
%(i_{c_1+2c_2-1}i_{c_1+2c_2})\cdots,$$
%  and one checks that
%  \begin{align*}
%      X_{\tau}=\left\{
%(x_1,\dots,x_k)\in D^k,
% x_{i_{c_1+1}}=x_{i_{c_1+2}}, \cdots  x_{i_{c_1+2c_2-1}}=x_{i_{c_1+2c_2}},\cdots
% \right\}.
%\end{align*}
  \begin{align*}
F_{\tau}(\chi)&=\sum_{x \in X_{\tau}}\prod_{i=1}^{k} \chi(x_i)\\
&=\sum_{x \in X_{\tau}}\prod_{i=1}^{c_1} \chi(x_i)\prod_{i=1}^{c_2}
\chi^2(x_{c_1+2i})\cdots\prod_{i=1}^{c_k} \chi^k(x_{c_1+c_2+\cdots+k i})\\
 &=\prod_{i=1}^{k}(\sum_{a\in D}\chi^i(a))^{c_i}.
\end{align*}
By the definition of $\Phi(D)$,  $F_{\tau}(\chi) \leq
(\Phi(D))^{\sum_{i=1}^{k}c_i}$ and hence
 \begin{align*}
k!N(k, b, D)&\geq p^{-1}(n)_k-p^{-1}\sum_{\chi\ne \chi_0}
 \sum_{\tau\in C_{k}}C(\tau)(\Phi(D))^{\sum_{i=1}^{k}c_i}\\
 &={p^{-1}} {(n)_k}-{p^{-1}} (p-1)\sum_{\sum ic_i=k} \frac
{k!} {1^{c_1}c_1! 2^{c_2}c_2!\cdots k^{c_{k}}c_{k}!}
(\Phi(D))^{\sum_{i=1}^{k}c_i}\\
&={p^{-1}} {(n)_k}- (\Phi(D)+k-1)_k. \qedhere
\end{align*}
The last equality is from Lemma \ref{lem2.6} and the proof is
complete.
\end{proof}

%Together with (\ref{4}), this lemma gives
%
%\begin{lem}\label{lem1.2}Let $H$ be a subgroup of  ${\bf F}_p^*$
%with $|H|>p^{\delta}$.  Let $N(k, b, H)$ be the number of
%$k$-subsets $S\subseteq H$ such that $\sum_{x\in S}x=b$.
%%of ordered
%%solutions of the equation $$x_1+x_2+\cdots+x_k=b,\ x_i\in H, \
%%x_i\neq x_j, \ i\ne   j.$$
%Then there is $\delta'>0$ such that
%$$\left|  {N(k, b, H)} -p^{-1}
%{|H| \choose k}\right|\leq {|H|^{1-\delta'}+k-1 \choose k}.$$
%\end{lem}

%\begin{thm}For any $H$ with $|H|=(p-1)/m$ we have
% \begin{align*}
%\left| M(H,k,b)-p^{-1}{(p-1)/m \choose k}  \right|\leq {\sqrt {p}
%+k-1\choose k}.
% \end{align*}
%\end{thm}
%This improve the precious bound of Zhu-Wan when $q=p$ is prime.
This lemma together with the bound (\ref{4}) given by Bourgain and
Konyagin gives the following lemma.
\begin{lem}\label{lem3.3}
Choose $H=\{x^m, x\in {\bf F}_p^*\}$. Suppose that
$|H|=s>p^{\delta}$. Let $M(k,b)=M(k,b,H)$ be the number of
$k$-subsets $S\subseteq H$ such that $\sum_{x\in S}x=b$. Then we
have
$$\left|  {M(k, b)} -p^{-1}
{s\choose k}\right|\leq {s^{1-\delta'}+k-1 \choose k}.$$
\end{lem}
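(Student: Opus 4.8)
The plan is to apply Lemma \ref{lem1.1} directly to the specific set $D = H = \{x^m : x \in {\bf F}_p^*\}$. The statement of Lemma \ref{lem3.3} is, in effect, the instantiation of Lemma \ref{lem1.1} with $D = H$, $n = |H| = s$, and $N(k,b,H) = M(k,b)$, so the only real content is controlling the quantity $\Phi(H) = \max_{\chi \neq \chi_0}\bigl|\sum_{a \in H}\chi(a)\bigr|$ in terms of $s$. The key observation is that every nontrivial additive character of ${\bf F}_p$ has the form $\chi_a(x) = e_p(ax)$ with $p \nmid a$, so
\begin{align*}
\Phi(H) = \max_{p \nmid a}\Bigl|\sum_{x \in H}e_p(ax)\Bigr|,
\end{align*}
which is exactly the multiplicative-subgroup exponential sum bounded by Bourgain--Konyagin in (\ref{4}). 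Since $|H| = s > p^\delta$ by hypothesis, (\ref{4}) furnishes a constant $\delta' = \delta'(\delta) > 0$ with $\Phi(H) < s^{1-\delta'}$.

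First I would recall that $H$ is a subgroup of ${\bf F}_p^*$ (it is the image of the $m$-th power map, hence closed under multiplication), so (\ref{4}) applies verbatim. Then I would combine this with Lemma \ref{lem1.1} applied to $D = H$:
\begin{align*}
\left| M(k,b) - p^{-1}\binom{s}{k}\right| \leq \binom{\Phi(H)+k-1}{k}.
\end{align*}
Finally, since the binomial coefficient $\binom{x+k-1}{k} = \frac{(x+k-1)_k}{k!}$ is monotonically increasing in $x$ for $x > 0$ and $\Phi(H) < s^{1-\delta'}$, I would conclude
\begin{align*}
\binom{\Phi(H)+k-1}{k} \leq \binom{s^{1-\delta'}+k-1}{k},
\end{align*}
which gives the claimed bound.

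There is essentially no obstacle here beyond bookkeeping: the lemma is a clean corollary, and the substantive analytic input (the subgroup character sum bound) is quoted as (\ref{4}). The one point requiring a line of care is the monotonicity of $\binom{x+k-1}{k}$ in $x$, since the binomial coefficient has a real upper argument; this follows because each factor $x, x+1, \dots, x+k-1$ in the product $(x+k-1)_k$ is positive and increasing in $x$ once $x > 0$, and $\Phi(H) \geq 1$ unless $H = \{1\}$ (a case excluded by $s > p^\delta$). I expect the write-up to be two or three sentences of this kind, with the monotonicity remark being the only place one might otherwise be tempted to gloss over a detail.
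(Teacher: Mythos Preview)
Your proposal is correct and follows exactly the approach the paper takes: the paper does not give a formal proof of Lemma~\ref{lem3.3} at all, merely stating that it follows from Lemma~\ref{lem1.1} combined with the Bourgain--Konyagin bound~(\ref{4}), which is precisely what you spell out. Your write-up is in fact more careful than the paper's, since you make explicit the monotonicity of $\binom{x+k-1}{k}$ in $x>0$ needed to pass from $\Phi(H)$ to $s^{1-\delta'}$.
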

Next lemma is a counting formula, which allows us to
\texttt{"}lift\texttt{"} the solution of the subset sum problem in
the subgroup to the Odlyzko-Stanley enumeration problem.
\begin{lem}\label{lem1.3}Suppose $n\mid p-1$ and denote $s=(p-1)/n$. Then
 \begin{align*}{p-1 \choose k}&={s \choose k} {n\choose 1}^k+{s \choose k-1}(k-1){n \choose 1}^{k-2}{n \choose 2}+
\cdots\\+&{s\choose j}\sum_{i_1>0,i_2>0,\cdots,i_j>0,\sum_{t=1}^j
i_t=k} {n \choose i_1}{n \choose i_2}\cdots {n \choose i_j}+\cdots+
{s \choose 1}{n\choose k}.
 \end{align*}
 \begin{proof}
It is direct by a double counting argument. The left side counts the
number of $k$-subsets of $p-1$ balls. Divide $p-1$ balls into $s$
equal boxes with each of size $n$ and count the same number by two
steps. Choose boxes first and then choose the balls in the chosen
boxes. The number is exactly the right side.
 \end{proof}
\end{lem}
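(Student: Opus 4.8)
The plan is to prove the identity by a generating-function (coefficient-extraction) argument, which packages the double counting cleanly and sidesteps any ordered-versus-unordered bookkeeping. The key observation is that since $n\mid p-1$ we may write $p-1=sn$, and therefore
$$(1+x)^{p-1}=\left((1+x)^n\right)^s.$$
The coefficient of $x^k$ on the left-hand side is exactly $\binom{p-1}{k}$, the quantity we wish to expand.

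First I would isolate the ``nonempty-box'' part of a single factor. Writing $(1+x)^n=1+g(x)$ with $g(x)=\sum_{i=1}^{n}\binom{n}{i}x^i$, the binomial theorem gives
$$(1+x)^{p-1}=(1+g(x))^s=\sum_{j=0}^{s}\binom{s}{j}g(x)^j.$$
Next I would extract the coefficient of $x^k$ from $g(x)^j$. Since $g$ has no constant term, expanding the $j$-th power produces precisely the ordered compositions $i_1,\dots,i_j$ of $k$ into $j$ positive parts, each carrying weight $\prod_{t=1}^{j}\binom{n}{i_t}$. Reading off the coefficient of $x^k$ on both sides then yields
$$\binom{p-1}{k}=\sum_{j=1}^{k}\binom{s}{j}\sum_{\substack{i_1>0,\dots,i_j>0\\ i_1+\cdots+i_j=k}}\binom{n}{i_1}\cdots\binom{n}{i_j},$$
which is exactly the claimed identity once the extreme terms are read off: the term $j=k$ forces every part to equal $1$ and contributes $\binom{s}{k}\binom{n}{1}^k$, the term $j=1$ contributes $\binom{s}{1}\binom{n}{k}$, and the $j=k-1$ term unpacks to $\binom{s}{k-1}(k-1)\binom{n}{1}^{k-2}\binom{n}{2}$ as displayed.

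Alternatively, and equivalently, one may argue bijectively as the statement suggests: partition the $p-1$ elements into $s$ boxes of size $n$, and classify each $k$-subset by the number $j$ of boxes it meets. Choosing those $j$ boxes in $\binom{s}{j}$ ways and then selecting $i_t>0$ elements from the $t$-th chosen box accounts for every $k$-subset exactly once. The only point requiring care is that, after fixing an arbitrary ordering of the $j$ selected boxes, the sum over ordered tuples $(i_1,\dots,i_j)$ is in bijection with the $k$-subsets meeting exactly those boxes; I expect this ordered-versus-unordered reconciliation to be the sole subtlety, and the generating-function derivation makes it automatic, since the algebra of the power $g(x)^j$ records the ordered positive parts with no double counting.
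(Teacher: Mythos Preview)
Your proof is correct. Your primary route---extracting the coefficient of $x^k$ from $(1+x)^{p-1}=\bigl(1+g(x)\bigr)^s$ with $g(x)=\sum_{i\ge 1}\binom{n}{i}x^i$---is a genuinely different (and in fact cleaner) argument than the paper's, which gives only the two-line double counting you sketch as your alternative: partition the $p-1$ elements into $s$ boxes of size $n$, classify $k$-subsets by the set of $j$ boxes they meet, and count. The generating-function version has the advantage that the ordered-composition sum $\sum_{i_1+\cdots+i_j=k,\ i_t>0}\prod_t\binom{n}{i_t}$ falls out of $g(x)^j$ automatically, so the ordered/unordered reconciliation you flag is handled by the algebra; the paper's bijective version is shorter to state but leaves exactly that bookkeeping implicit. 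Either way the identity is immediate, and your check of the extreme terms $j=k$, $j=k-1$, $j=1$ matches the displayed formula.
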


{ \bf Proof of Theorem 1.1} \ \  \ Choose $H=\{x^m, x\in {\bf
F}_p^*\}$.  We suppose that $m\mid p-1$ without loss of generality,
otherwise we can replace $m$ by $(m,p-1)$. Note that
$|H|=s=(p-1)/m>p^{\delta}$. Let $M(k,b)=M(k,b,H)$ be the number of
unordered solutions of the equation
\begin{align}\label{1.7}
x_1+x_2+\cdots+x_k=b,  \ x_i\in H,\  x_i\neq x_j, \ i\ne j.
 \end{align}
  By Lemma \ref{lem3.3} we have
$$\left|  {M(k, b)} -p^{-1}
{s\choose k}\right|\leq {s^{1-\delta'}+k-1 \choose k}.$$ %Thus
%$${M(k, b)}=p^{-1}{s\choose k}+{ o(s)+k-1 \choose k}.$$
 Recall $N_m^*(k,b)$ is also the number of unordered solutions of the diagonal
 equation  \begin{align} \label{1.8} x_1^m+x_2^m+\cdots+x_k^m=b,
 \ x_i\in {\bf F}_p^*,\  x_i\neq x_j, \ i\ne j.
 \end{align}
Similar to the proof of Lemma \ref{lem1.3}, any solution of
(\ref{1.7}) can be lifted to solutions of (\ref{1.8}). This counting
argument between (\ref{1.7}) and (\ref{1.8}) gives
 \begin{align*}N_m^*(k,b)&=M(k, b) {m\choose 1}^k+M(k-1, b)(k-1){m\choose 1}^{k-2}
 {m\choose 2}+
\cdots\\+&M(j, b)\sum_{i_1>0,i_2>0,\cdots,i_j>0,\sum_{t=1}^j i_t=k}
+{m \choose i_1}{m \choose i_2}\cdots {m \choose i_j}+\cdots M(1,
b){m\choose k}.
 \end{align*}
By Lemma \ref{lem1.3} this implies
\begin{align*}\left| N_m^*(k,b) -p^{-1}
{p-1\choose k}\right|&\leq {ps^{-\delta'}+mk-m \choose k}\\
&\leq{p^{1-\epsilon}+mk-m \choose k},
\end{align*}
where $\epsilon=\delta\delta'$ and the proof is complete. \qed

\begin{prop}\label{cor3.3} Suppose that $p, m, s, \delta, \epsilon$ are as in Theorem 1.1. If there is a constant $0<c<1$ such that
$-\frac 1 {\log{c}}\log{p}<k<cp^{\delta}-p^{\delta-\epsilon}$, then
the equation
\begin{align*}x_1^m+x_2^m+\cdots+x_k^m=b, \ x_i\in {\bf F}_p^*, \
x_i\neq x_j, \ i\ne j.
 \end{align*}
 has at least a solution. In particular, if we choose $c=ep^{-\epsilon}$, we then have a
 simpler condition $\epsilon^{-1}<k<(e-1)p^{\delta-\epsilon}$, which has a
constant lower bound.
 \end{prop}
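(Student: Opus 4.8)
The plan is to deduce this at once from the estimate $(\ref{6})$ of Theorem 1.1. Fix $b\in{\bf F}_p$; since $N_m^*(k,b)$ is a nonnegative integer it is enough to show that it is positive. By $(\ref{6})$ one has $N_m^*(k,b)\ge p^{-1}{p-1\choose k}-{p^{1-\epsilon}+mk-m\choose k}$, so it is enough to verify the purely arithmetic inequality
\begin{align*}
{p-1\choose k}>p\,{p^{1-\epsilon}+mk-m\choose k}.
\end{align*}
Put $N:=p^{1-\epsilon}+mk-m=p^{1-\epsilon}+m(k-1)$. Cancelling $k!$, this is equivalent to
\begin{align*}
\frac{(p-1)_k}{(N)_k}=\prod_{j=0}^{k-1}\frac{p-1-j}{N-j}>p .
\end{align*}
Since $p-1-j\ge p-k$ and $N-j\le N$ for every $j$ in the range, each factor is at least $(p-k)/N$, so it suffices to prove $\bigl((p-k)/N\bigr)^{k}>p$, i.e.
\begin{align*}
k\,\log\frac{p-k}{N}>\log p .
\end{align*}

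The heart of the argument is therefore to see that $(p-k)/N$ is of order $1/c$. For this I would use both hypotheses on $k$. From $s=(p-1)/m>p^{\delta}$ one gets $m<p^{1-\delta}$, and then the right-hand bound $k<cp^{\delta}-p^{\delta-\epsilon}$ gives $mk<c\,mp^{\delta}<c(p-1)$, hence $m(k-1)<c(p-1)$. On the other hand, merely requiring that an admissible $k$ exist forces $cp^{\delta}-p^{\delta-\epsilon}>0$, i.e.\ $c>p^{-\epsilon}$, and hence $p^{1-\epsilon}<cp$. Combining, $N<c(p-1)+cp<2cp$ (and in fact $N<(1+o(1))cp$ whenever $c$ is bounded away from $p^{-\epsilon}$, the factor $2$ being relevant only in the near-threshold regime where $p^{1-\epsilon}$ is comparable to $m(k-1)$); since $k<cp^{\delta}$ is negligible next to $p$, this yields $(p-k)/N\ge (2c)^{-1}(1-o(1))$ as $p\to\infty$. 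Plugging this in, the requirement becomes $k\bigl(\log(1/c)-O(1)\bigr)>\log p$, which is guaranteed by the left-hand hypothesis $k>-\tfrac1{\log c}\log p$ once the constant $\epsilon=\epsilon(\delta)$ is shrunk slightly (and, for the finitely many small primes for which no admissible $k$ exists, there is nothing to prove). This establishes the first assertion, uniformly in $b$.

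For the ``in particular'' statement one specializes to $c=ep^{-\epsilon}$, which lies in $(0,1)$ exactly when $p^{\epsilon}>e$. Then the upper constraint becomes $cp^{\delta}-p^{\delta-\epsilon}=(e-1)p^{\delta-\epsilon}$, while the lower constraint becomes $-\tfrac1{\log c}\log p=\tfrac{\log p}{\epsilon\log p-1}$, which decreases to $\epsilon^{-1}$; so, up to the constant adjustment just mentioned, the condition reads $\epsilon^{-1}<k<(e-1)p^{\delta-\epsilon}$, and since $\epsilon^{-1}$ depends only on $\delta$ this is a genuine constant lower bound (one may even take $k=\lceil\epsilon^{-1}\rceil$). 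The step I expect to be the real obstacle is precisely the bookkeeping in the middle paragraph: squeezing $N=p^{1-\epsilon}+m(k-1)$ close enough to $cp$ that a purely \emph{logarithmic} threshold $k>-\tfrac1{\log c}\log p$ --- rather than a constant multiple of it --- already suffices, and checking that the $O(1)$ slack arising near the threshold $c\approx p^{-\epsilon}$ can be absorbed into $\epsilon(\delta)$ without damaging the ``in particular'' clause.
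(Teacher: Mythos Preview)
Your overall strategy matches the paper's exactly: reduce via Theorem~1.1 to the inequality $(p-1)_k/(N)_k>p$ with $N=p^{1-\epsilon}+m(k-1)$, then bound the ratio termwise and use the two hypotheses on $k$. The difficulty you flag in your final paragraph is self-inflicted, however, and the hand-waving about ``shrinking $\epsilon$'' and ``absorbing $O(1)$ slack'' is not legitimate, since $\epsilon$ is the fixed constant from Theorem~1.1.

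The issue is your bound $N<2cp$. You split $N$ into $p^{1-\epsilon}$ and $m(k-1)$ and bound each piece by $cp$ separately, picking up an unnecessary factor of $2$. The paper instead observes that the upper hypothesis on $k$ is \emph{designed} to give $N<cp$ in one stroke: since $m<p^{1-\delta}$, the condition $k<cp^{\delta}-p^{\delta-\epsilon}$ is equivalent to $p^{1-\delta}k<cp-p^{1-\epsilon}$, hence
\[
p^{1-\epsilon}+mk\le p^{1-\epsilon}+p^{1-\delta}k<cp,
\]
and a fortiori $N\le p^{1-\epsilon}+mk<cp$. With this in hand, each factor $\tfrac{p-1-j}{N-j}$ is at least $\tfrac{p-1}{N}>\tfrac{1}{c}$ (by monotonicity in $j$, sharper than your $(p-k)/N$), so the product exceeds $c^{-k}$, and $c^{-k}>p$ is exactly the lower hypothesis $k>-\tfrac{1}{\log c}\log p$. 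No adjustment of constants is required, and the ``in particular'' clause then follows by the substitution $c=ep^{-\epsilon}$ just as you wrote.
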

\begin{proof}
By Theorem 1.1, to ensure $N_m^*(k,b)>0$ it is sufficient to have
 \begin{align*}
p^{-1}{p-1 \choose k}\geq{p^{1-\epsilon}+mk-m \choose k},
 \end{align*}
that is,
$$\frac {(p-1)_k} {(p^{1-\epsilon}+mk-m)_k}>p.$$
This leads to the following inequality
$$\frac {p} {p^{1-\epsilon}+mk}>p^{1/k}.$$
Take $0<c<1$ such that
$p^{1-\epsilon}+mk<p^{1-\epsilon}+p^{1-\delta}k<cp$ and we have
$c^{-1}>p^{1/k}$ and then $k>-\frac 1 {\log{c}}\log{p}$. Solve the
first inequality we get that $k<cp^{\delta}-p^{\delta-\epsilon}$. If
$c=ep^{-\epsilon}$, then the condition  becomes $k>\frac
{\log{p}}{\epsilon\log{p}-1}>\epsilon^{-1}$.
\end{proof}

\begin{open}
  Is it true that the  bound
  \begin{align*}\left| N_m^*(k,b) -p^{-1}
{p-1\choose k}\right|&\leq {p^{1-\epsilon}+k-1 \choose k}\\
\end{align*}
holds as $ (\ref{3})$ for any $m<p^{1-\delta}$?
\end{open}

 If this bound is true, then the bound (\ref{5}) will be strengthened by
 a significantly large error term and the bound in Corollary \ref{cor3.3} will
be improved.

%{\bf Acknowledgements.} The author wishes to thank the reviewers for
%their several constructive comments.
%

\end{document}